\newcommand{\sumprime}{\if@display\sideset{}{'}\sum%
            \else\sum'\fi}
\begin{document}

\numberwithin{equation}{section}

\newtheorem{theorem}{Theorem}[section]
\newtheorem{proposition}[theorem]{Proposition}
\newtheorem{conjecture}[theorem]{Conjecture}
\def\theconjecture{\unskip}
\newtheorem{corollary}[theorem]{Corollary}
\newtheorem{lemma}[theorem]{Lemma}
\newtheorem{observation}[theorem]{Observation}
\newtheorem{definition}{Definition}
\numberwithin{definition}{section} 
\newtheorem{remark}{Remark}
\def\theremark{\unskip}
\newtheorem{kl}{Key Lemma}
\def\thekl{\unskip}
\newtheorem{question}{Question}
\def\thequestion{\unskip}
\newtheorem{example}{Example}
\def\theexample{\unskip}
\newtheorem{problem}{Problem}

\thanks{Supported by National Natural Science Foundation of China, No. 11771089}

\address{School of Mathematical Sciences, Fudan University, Shanghai, 20043, China}

\email{boychen@fudan.edu.cn}

\title[Every bounded pseudoconvex domain with H\"older boundary is hyperconvex]{Every bounded pseudoconvex domain with H\"older boundary  is hyperconvex}

 \author{Bo-Yong Chen}
\date{}

\begin{abstract}
We show that every bounded pseudoconvex domain with H\"older boundary in $\mathbb C^n$ is hyperconvex.
\end{abstract}

\maketitle

\section{Introduction}

A bounded domain $\Omega$ in $\mathbb C^n$ is called hyperconvex \cite{Stehle} if there exists a negative continuous plurisubharmonic (psh) function $\rho$ on $\Omega$ such that $\{\rho<-c\}\subset\subset \Omega$ for all $c>0$. Actually, even the continuity condition of $\rho$ is superfluous (see \cite{Blocki}, Theorem 1.4.6). It is easy to see that hyperconvexity implies pseudoconvexity, while  there are pseudoconvex domains (e.g.,  the Hartogs triangle) which are not hyperconvex. There is  a close link between  the Bergman kernel (or the Bergman metric) and hyperconvex domains (see \cite{JP} and the references therein).  

On the positive side, it was Diederich-Fornaess \cite{DF77} who first showed that every  bounded pseudoconvex domain with $C^2$ boundary in $\mathbb C^n$ is hyperconvex. Actually, they proved that there  exists  a negative smooth psh function which is comparable to  $-\delta^\alpha$ for some $\alpha>0$, where $\delta$ is the boundary distance. 
 Kerzman-Rosay \cite{KerzmanRosay} showed that every   bounded pseudoconvex domain with $C^1$ boundary in $\mathbb C^n$  is hyperconvex.
  Demailly \cite{Demailly87} proved that every bounded pseudoconvex domain with Lipschitz boundary in $\mathbb C^n$ admits a negative smooth psh function which is comparable to $ (\log \delta)^{-1}$. The result of Diederich-Fornaess was extended by Harrington \cite{HarringtonPSH} to bounded pseudoconvex domains with Lipschitz boundaries in $\mathbb C^n$ and by Ohsawa-Sibony \cite{OS} to pseudoconvex domains with $C^2$  boundaries in $\mathbb P^n$. The latter was then generalized  by Harrington \cite{Harrington_2} to Lipschitz pseudoconvex domains in $\mathbb P^n$. Only recently, Avelin et al. \cite{LogLip} were able to show that the Lipschitz condition in Demailly's result can be relaxed to Log-Lipschitz for hyperconvexity.
 
For a bounded domain $\Omega\subset \mathbb C^n$ we say that $\partial \Omega$ is H\"older if it can be written locally as the graph of a H\"older continuous function.  The goal of this note is to show the following
 
 \begin{theorem}\label{th:Main}
 Every bounded pseudoconvex domain with H\"older boundary in\/ $\mathbb C^n$ is hyperconvex.
\end{theorem}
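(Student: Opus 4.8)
The plan is to reduce the theorem to the construction of a local plurisubharmonic barrier at each boundary point, and then to obtain such a barrier from the graph structure together with the pseudoconvexity of $\Omega$. For the reduction, recall that by the localization arguments of Kerzman--Rosay \cite{KerzmanRosay} and Demailly \cite{Demailly87} a bounded pseudoconvex domain $\Omega$ is hyperconvex as soon as every $\xi\in\partial\Omega$ has a neighbourhood $U$ carrying a negative psh function $u$ on $\Omega\cap U$ with $u(z)\to0$ as $z\to\partial\Omega\cap U$: patching these local barriers against the pluricomplex Green function $g_\Omega(\,\cdot\,,a)$ of $\Omega$ with a fixed pole $a\in\Omega$ (whose good behaviour uses the pseudoconvexity of $\Omega$) shows that $g_\Omega(\,\cdot\,,a)\to0$ at every boundary point, so that $\max\{g_\Omega(\,\cdot\,,a),-1\}$ is the required bounded psh exhaustion. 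Note also that $\Omega\cap U$ is itself pseudoconvex, being the intersection of two pseudoconvex domains. So it suffices to produce the barrier $u$ on $\Omega\cap U$.

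To set up the local picture, fix $\xi\in\partial\Omega$. Since $\partial\Omega$ is a H\"older graph near $\xi$, after a complex-linear change of coordinates sending the base hyperplane to $\{\mathrm{Im}\,z_n=0\}$ we may assume that $\xi=0$ and, on a small polydisc $U$,
\[
\Omega\cap U=\bigl\{\,\mathrm{Im}\,z_n> f(z_1,\dots,z_{n-1},\mathrm{Re}\,z_n)\,\bigr\},
\]
with $f$ H\"older of some exponent $\alpha\in(0,1]$ and $f(0)=0$; thus $\Omega$ is, near $\xi$, the region above a graph that does not involve $\mathrm{Im}\,z_n$, the analogue of a Hartogs domain in the $\mathrm{Im}\,z_n$–direction. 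Write $\delta_\nu(z)=\mathrm{Im}\,z_n-f(\,\cdot\,)$ for the distance to $\partial\Omega$ measured in the $\mathrm{Im}\,z_n$–direction. Because $f$ is H\"older one has $\delta_\nu^{1/\alpha}\lesssim\delta\lesssim\delta_\nu$ near $\partial\Omega\cap U$, where $\delta$ is the Euclidean boundary distance; in particular $\delta_\nu(z)\to0$ exactly when $z\to\partial\Omega\cap U$, so any continuous psh function that is negative on $\Omega\cap U$ and comparable to $-\delta_\nu^{\,\beta}$ near $\partial\Omega\cap U$, for some fixed $\beta>0$, will be a valid local barrier.

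The mechanism, and the main obstacle, are already visible in the naive guess $u=-\delta_\nu^{\,\beta}$: since $i\partial\bar\partial\delta_\nu=-i\partial\bar\partial f$ ($\mathrm{Im}\,z_n$ being pluriharmonic), a direct computation gives
\[
i\partial\bar\partial\bigl(-\delta_\nu^{\,\beta}\bigr)
=\beta\,\delta_\nu^{\,\beta-1}\Bigl(\tfrac{1-\beta}{\delta_\nu}\,i\partial\delta_\nu\wedge\bar\partial\delta_\nu
\;+\;i\partial\bar\partial f\Bigr),
\]
which is $\ge0$ precisely when $i\partial\bar\partial f\ge-\tfrac{1-\beta}{\delta_\nu}\,i\partial\delta_\nu\wedge\bar\partial\delta_\nu$. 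On vectors complex-tangential to $\partial\Omega$ the right-hand side vanishes and this is exactly the pseudoconvexity of $\Omega\cap U$, extracted (in the distributional sense) from $-\log\delta$ being psh; the difficulty lies in the remaining directions, where $i\partial\bar\partial f$ is only a distribution of negative order and can be very negative, while for small $\alpha$ the weight $\delta_\nu^{-1}$ does not by itself absorb it. Hence the barrier must incorporate the multiscale structure of $f$: one decomposes $f=\sum_{k\ge0}h_k$ into dyadic pieces with $\|h_k\|_\infty=O(2^{-k\alpha})$ and $h_k$ band-limited at frequency $\sim2^{k}$ (so each $h_k$ has an explicit almost-holomorphic extension with controlled growth), forms the barrier as the corresponding series — or upper envelope — of scale-$k$ corrections to $-\delta_\nu^{\,\beta}$, and verifies plurisubharmonicity scale by scale using the distributional positivity of the complex Hessian of $f$ along complex-tangential directions at each scale. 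The convergence of $\sum_k2^{-k\alpha}$ is what keeps the resulting psh function bounded, while the divergence of $\sum_k2^{-k(1-\alpha)}$ is exactly why no Diederich--Fornaess/Demailly-type rate survives and one obtains only plain hyperconvexity. I expect this last verification to be the heart of the matter: one must balance, uniformly over scales and over the scale-dependent splitting into complex-tangential and complex-normal directions, the positive weight $\delta_\nu^{-1}$ against the error terms forced by the H\"older-but-not-better regularity of $f$. Once the barrier is constructed, the localization step above completes the proof.
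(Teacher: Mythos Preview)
Your localization step is correct and matches the paper's reduction. The gap is in the local construction: you have not carried out the multiscale argument, and there is a genuine obstruction to doing so as sketched. The positivity you invoke---``the distributional positivity of the complex Hessian of $f$ along complex-tangential directions at each scale''---is simply not available scale by scale. Pseudoconvexity of the graph domain says that $i\partial\bar\partial(-\log\delta)\ge0$ as a current on $\Omega\cap U$; in the smooth case this unwinds to a Levi condition involving both the Hessian of $f$ and quadratic expressions in $\nabla f$, restricted to directions that themselves depend on $\nabla f$. None of this is linear in $f$, so replacing $f$ by a band-limited piece $h_k$ or by a partial sum destroys the inequality: the region $\{\mathrm{Im}\,z_n>h_k\}$ need not be pseudoconvex, and nothing forces $i\partial\bar\partial h_k$ to be nonnegative on the complex tangent space of the true boundary. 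Without that sign, the balance you describe has no evident closure, since the second-derivative size of $h_k$ is of order $2^{k(2-\alpha)}$ and there is no cancellation left to absorb it against the weight $\delta_\nu^{-1}$.

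The paper avoids this difficulty entirely by never differentiating $f$. Instead of decomposing $f$, it builds a one-parameter family of \emph{outer} pseudoconvex domains $\Omega^t\supset\overline\Omega$ by translating the graph, $\{\mathrm{Im}\,z_n<g+t\}$; these are pseudoconvex for free (same shape as $\Omega$), and the H\"older condition yields the two-sided bound $c\,t^{1/\alpha}\le\delta_{\Omega^t}\le t$ on $\partial\Omega$. The psh test functions are then just normalizations of $-\log\delta_{\Omega^{\varepsilon t}}$, which are psh on $\Omega$ by Oka's lemma with no regularity hypothesis whatsoever. An iterative comparison (Lemma~\ref{lm:Key}) between the relative extremal function $\varrho$ and these $\psi_t$ across the shells $\Omega\setminus\Omega_{\alpha t}\subset\Omega\setminus\Omega_t$ converts the two-sided bound on $\delta_{\Omega^t}$ into $-\varrho\lesssim(-\log\delta)^{-\tau}$ for every $\tau<\alpha/(1-\alpha)$. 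In particular a quantitative (logarithmic) rate does survive, contrary to your expectation; what fails is only a Diederich--Fornaess power $\delta^\beta$.
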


\begin{remark}
{\rm The conclusion fails if $\mathbb C^n$ is replaced by  $\mathbb P^n$ (cf. \cite{DO98}\footnote{The author is indebted to Professor Takeo Ohsawa for providing this reference.}, Theorem 1.4 and its proof).}
\end{remark}

The proof of Theorem \ref{th:Main} depends on an estimate for the relative extremal function (see Lemma \ref{lm:Key}), which is motivated by Gr\"uter-Widman's estimate for the capacity potential of sets in $\mathbb R^N$ (cf. \cite{GW}). The same method  also gives a quick proof of a weak version of Harrington's result that every bounded pseudoconvex domain with Lipschitz boundary has a positive hyperconvexity index (see \cite{ChenH-index} for the definition and applications). 

Actually, Theorem \ref{th:Main} can be extended to certain domains beyond H\"older boundary regularity (see \S\,5). Nevertheless, the following question still remains open. 

\begin{problem}
Is a bounded pseudoconvex domain  $\Omega\subset \mathbb C^n$ hyperconvex if $\partial \Omega$ can be written locally as the graph of a continuous function? 
\end{problem}

\section{A crucial estimate for the relative extremal function}
For a bounded domain $\Omega\subset{\mathbb C}^n$ we define  the\/ {\it relative extremal function\/} of a (fixed) closed ball $\overline{B}\subset \Omega$ by
 $$
 \varrho(z):=\varrho_{\overline{B}}(z):=\sup\left\{u(z):u\in PSH^-(\Omega),\,u|_{\overline{B}}\le -1\right\},
 $$
 where $PSH^-(\Omega)$ denotes the set of negative psh functions on $\Omega$.  Clearly, the upper semicontinuous regularization $\varrho^\ast$ of $\varrho$ is psh on $\Omega$.  Actually, we always have $\varrho=\varrho^\ast$. Indeed, for every element
$u$ in the family we have $u\le  h$ on $B'\backslash B$ in view of the maximum principle, where $h$ is  a harmonic function
equal to $-1$ on  $\partial B$ and equal to $0$ on the boundary of some bigger
ball $B'$. It follows that $\varrho\le h$ on $B'\backslash B$, which implies that $\varrho^\ast\le h$ on $B'\backslash B$ and $\sup_{\overline{B}}\varrho^\ast=\sup_{\partial B}\varrho^\ast\le -1$. Thus we have $\varrho\ge \varrho^\ast$. The opposite inequality is trivial\footnote{The author is indebted to one of the referees for providing this elementary argument.}.

 We have the following very useful estimate for $\varrho$. 

    \begin{lemma}\label{lm:Key}
   Let $\Omega$ be a bounded pseudoconvex domain in $\mathbb C^n$. Set $\Omega_t=\{z\in \Omega:\delta(z)>t\}$ for $t>0$.  Suppose  there exist a number $0<\alpha<1$ and  a family $\{\psi_t\}_{0<t\le t_0}$ of psh functions on $\Omega$ satisfying 
     $$
\sup_\Omega \psi_t<1\ \ \ \text{and}\ \ \ \inf_{\Omega\backslash \Omega_{\alpha t}} \psi_{t} > \sup_{\partial \Omega_{ t}} \psi_{t}
   $$ 
   for all $t$.
     Then we have 
    \begin{equation}\label{eq:H_0}
    \sup_{\Omega\backslash \Omega_r} (-\varrho) \le \exp\left[-\frac{1}{\log 1/\alpha} \int_{r/\alpha}^{ r_0} \frac{\kappa_\alpha(t)} t dt  \right]
    \end{equation}
    where $r/\alpha<r_0\ll1$ and
    $$
  \kappa_\alpha(t):= \frac{\inf_{\Omega\backslash \Omega_{\alpha t}}\psi_t-
 \sup_{\partial \Omega_t} \psi_t}{1-\sup_{\partial \Omega_t} \psi_t}.
   $$  
         \end{lemma}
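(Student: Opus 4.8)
The plan is to reduce \eqref{eq:H_0} to a single multiplicative decay step and then integrate. Write $m(s):=\sup_{\Omega\setminus\Omega_s}(-\varrho)$. The constant $-1$ belongs to the family defining $\varrho$, so $\varrho\ge-1$ and $0\le m(s)\le1$; clearly $m$ is nondecreasing, and $m(s)>0$ for $s>0$ (otherwise $\varrho$ would attain the value $0$ inside $\Omega$, forcing $\varrho\equiv0$, against $\varrho\le-1$ on $\overline{B}$). From the paragraph preceding the lemma, $\varrho=\varrho^{*}$ is psh on $\Omega$, $\varrho\le0$, and $\sup_{\overline{B}}\varrho\le-1$; hence $\varrho$ is itself admissible for its own defining problem and dominates every admissible function pointwise. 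The whole statement will follow from the one-step estimate
\[
m(\alpha t)\ \le\ \bigl(1-\kappa_\alpha(t)\bigr)\,m(t)\qquad\text{for }0<t\le t_0\text{ with }\overline{B}\subset\Omega_t .
\]

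To prove it, set $a:=\sup_{\partial\Omega_t}\psi_t$, so $a\le\sup_\Omega\psi_t<1$, and let $v_t:=(\psi_t-a)/(1-a)$; this is psh on $\Omega$ with $v_t<1$ on $\Omega$, $v_t\le0$ on $\partial\Omega_t$, and $v_t\ge\kappa_\alpha(t)$ on $\Omega\setminus\Omega_{\alpha t}$. Put $w_t:=m(t)(v_t-1)$, a negative psh function on $\Omega$, and $V:=\Omega\setminus\overline{\Omega_t}$; then $V$ is open, $\overline{B}\cap V=\emptyset$ because $\overline{B}\subset\Omega_t$, and $\Omega\setminus\Omega_{\alpha t}\subset V$ because $\alpha t<t$. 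On $\partial V\cap\Omega\subset\partial\Omega_t\subset\{\delta\le t\}$ we have
\[
\limsup_{V\ni z\to\zeta}w_t(z)\ \le\ w_t(\zeta)\ \le\ -m(t)\ \le\ \varrho(\zeta),
\]
using $v_t(\zeta)\le0$ and the scale-$t$ bound $-\varrho\le m(t)$ on $\{\delta\le t\}$. By a standard gluing lemma for plurisubharmonic functions, the function $U$ equal to $\max(\varrho,w_t)$ on $V$ and to $\varrho$ on $\Omega\setminus V$ is psh on $\Omega$; it is negative and equals $\varrho\le-1$ on $\overline{B}$, hence admissible, so $U\le\varrho$. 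Since also $U\ge w_t$ on $V$, we get $w_t\le\varrho$ on $V\supset\Omega\setminus\Omega_{\alpha t}$, where $v_t\ge\kappa_\alpha(t)$ yields $-\varrho\le m(t)(1-\kappa_\alpha(t))$; taking the supremum proves the one-step estimate.

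Now fix $r$ with $r/\alpha<r_0\ll1$, where $r_0\le t_0$ is small enough that $\overline{B}\subset\Omega_{r_0}$; then the one-step estimate is available for all $t\in[r/\alpha,r_0]$. Put $h:=\log(1/\alpha)$ and $g(\sigma):=-\log m(e^\sigma)$, a nonnegative nonincreasing function. Since $\log(1-x)\le-x$, the estimate becomes $g(\sigma-h)-g(\sigma)\ge\kappa_\alpha(e^\sigma)$ for $\sigma\in[\log(r/\alpha),\log r_0]$. Integrating over this interval, substituting $\tau=\sigma-h$ in one term and discarding a nonnegative integral, the left side is at most $\int_{\log r}^{\log(r/\alpha)}g\le h\,g(\log r)$ by monotonicity of $g$; hence $g(\log r)\ge h^{-1}\int_{\log(r/\alpha)}^{\log r_0}\kappa_\alpha(e^\sigma)\,d\sigma$. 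Rewriting this with $t=e^\sigma$, $d\sigma=dt/t$, is exactly \eqref{eq:H_0}.

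The crux is the one-step estimate: the hypotheses control $\psi_t$ only on a single scale and only in the interior of $\Omega$, and the task is to manufacture from this a \emph{global} competitor for $\varrho$. The gluing construction above---attaching $\max(\varrho,w_t)$ to $\varrho$ across $\partial\Omega_t$, where the scale-$t$ bound $-\varrho\le m(t)$ on $\{\delta\le t\}$ is precisely what keeps the join plurisubharmonic---is what carries the estimate from scale $t$ to scale $\alpha t$. The passage to \eqref{eq:H_0} is then elementary; the only slightly delicate point is the telescoping of the two integrals, which is legitimate for every $r$ with $r/\alpha<r_0$.
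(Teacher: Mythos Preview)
Your proof is correct and the overall architecture---the one-step decay estimate $m(\alpha t)\le(1-\kappa_\alpha(t))\,m(t)$ followed by integration in $\log t$---is exactly that of the paper. The integration step is line-by-line the same as the paper's, only written in the variable $\sigma=\log t$.

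The genuine difference lies in how you obtain the one-step estimate. The paper first assumes $\Omega$ is hyperconvex, so that $\varrho$ is continuous with boundary values $0$; it then compares $\varrho$ with $\dfrac{m(t)}{1-\sup_{\partial\Omega_t}\psi_t}(\psi_t-1)$ on the annulus $\Omega_\varepsilon\setminus\overline{\Omega_t}$, checks the inequality on both boundary pieces (using the boundary limit of $\varrho$ on $\partial\Omega_\varepsilon$), and invokes the \emph{maximal} property of $\varrho$ off $\overline{B}$. Letting $\varepsilon\downarrow0$ gives the inequality on $\Omega\setminus\Omega_t$, and the general case is handled by exhausting $\Omega$ by the hyperconvex subdomains $\Omega_{1/j}$ and using $\varrho_j\downarrow\varrho$. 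You instead build the global competitor $U=\max(\varrho,w_t)$ on $V=\Omega\setminus\overline{\Omega_t}$ and $U=\varrho$ elsewhere, check the gluing condition on $\partial\Omega_t$, and conclude $U\le\varrho$ directly from the extremal definition of $\varrho$. This sidesteps both the maximal property and the hyperconvex reduction/exhaustion entirely, which is a real economy; the paper's route, on the other hand, makes the role of the maximal property of $\varrho$ explicit and connects with classical comparison arguments for extremal functions.
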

    
    \begin{proof}
    We first assume that $\Omega$ is hyperconvex. Then $-1\le \varrho<0$ is a continuous negative psh function on $\Omega$ satisfying $\lim_{z\rightarrow \zeta} \varrho(z)=0$ for all $\zeta\in \partial \Omega$ (cf. \cite{Zahariuta}\footnote{The author is indebted to Professor Peter Pflug for providing this reference.}; see also  \cite{Blocki}, Proposition 3.1.3/vii)). Note that for $z\in \partial \Omega_t$ 
  \begin{equation}\label{eq:H_1}
  (1-\psi_{t}(z)) \sup_{\Omega\backslash  \Omega_t} (-\varrho) \ge \left[1-\sup_{\partial \Omega_t} \psi_t\right](-\varrho(z)).
  \end{equation}
  Since $\sup_\Omega \psi_t<1$ and $\lim_{z\rightarrow \zeta}\varrho(z)=0$ for  all $\zeta\in \partial \Omega$, we conclude that (\ref{eq:H_1}) holds for $z\in \partial \Omega_\varepsilon$ for all $\varepsilon\ll t$. 
 In other words, 
 \begin{equation}\label{eq:H_2}
 \varrho(z)\ge \frac{\sup_{\Omega\backslash  \Omega_t} (-\varrho)}{1-\sup_{\partial \Omega_t} \psi_t}\cdot (\psi_t(z)-1)
 \end{equation}
 holds for all $z\in \partial (\Omega_\varepsilon\backslash\overline{\Omega}_t)$. Since the RHS of (\ref{eq:H_2}) is psh, it follows from the\/ {\it maximal}\/ property of $\varrho$ that (\ref{eq:H_2}) (hence (\ref{eq:H_1})) holds for all $z\in \Omega_\varepsilon\backslash \Omega_t$ and $t\le r_0\ll1$ (so that $\overline{B}\subset \Omega_{r_0}$). Since $\varepsilon$ can be arbitrarily small, we conclude that (\ref{eq:H_1}) holds on $\Omega\backslash \Omega_t$. In particular, if $z\in   \Omega \backslash \Omega_{\alpha t}$ then  we have
 \begin{eqnarray}\label{eq:H_3}
 -\varrho(z) & \le & \sup_{\Omega\backslash  \Omega_t} (-\varrho) \cdot \frac{1-\inf_{\Omega\backslash \Omega_{\alpha t}}\psi_t}{1-\sup_{\partial \Omega_t} \psi_t}\nonumber\\
 & = & \sup_{\Omega\backslash  \Omega_t} (-\varrho) \cdot \left(1-\kappa_\alpha(t)\right).
   \end{eqnarray}
 
   Set $M(t):=\sup_{\Omega\backslash \Omega_t} (-\varrho)$. Since $-\log (1-x)\ge x$ for $x\in [0,1)$, it follows from (\ref{eq:H_3})  that 
      \begin{eqnarray*}
      \frac{\log M(t)}t-\frac{\log M(\alpha t)}t & \ge & \frac{\kappa_\alpha(t)}t .
               \end{eqnarray*}  
               Integration from $r$ to $r_0$ gives    
               \begin{eqnarray*}
  \int_{r}^{r_0} \frac{\kappa_\alpha(t)} t  dt  & \le &   \int_{r}^{r_0}   \frac{\log M(t)}t dt- \int_{r}^{r_0}  \frac{\log M(\alpha t)}t dt\\
               & = &  \int_{r}^{r_0}   \frac{\log M(t)}t dt- \int_{\alpha r}^{\alpha r_0}  \frac{\log M( t)}t dt\\
               & = &   \int_{\alpha r_0}^{r_0}   \frac{\log M(t)}t dt  -\int_{\alpha r}^{r}  \frac{\log M(t)}t dt\\
               &\le & (\log M(r_0)-\log M(\alpha r)) \log 1/\alpha,
                             \end{eqnarray*}
            because $M(t)$ is nondecreasing.  Thus (\ref{eq:H_0}) holds for $\varrho$ because $M(r_0)\le 1$.
            
            In general we may exhaust $\Omega$ by a sequence  $\{\Omega_{1/j}\}$ of hyperconvex domains. Let $\varrho_j$ be the relative extremal function of $\overline{B}$ relative to $\Omega_{1/j}$. Since $\inf_{\Omega\backslash \Omega_{\alpha t}} \psi_t\le \inf_{\Omega_{1/j}\backslash \Omega_{\alpha t}} \psi_t$, the previous argument yields 
            $$
          \sup_{\Omega_{1/j}\backslash \Omega_r} (-\varrho_j) \le \exp\left[-\frac{1}{\log 1/\alpha} \int_{r/\alpha}^{ r_0} \frac{\kappa_\alpha(t)} t dt  \right]  
            $$
            for all $j\ge j_r\gg 1$. Since $\varrho_j\downarrow \varrho$ (cf. \cite{Blocki}, Theorem 3.1.7), we get (\ref{eq:H_0}).
            \end{proof}  

    \begin{remark}
  {\rm Similar ideas were used by the author \cite{ChenCap} to estimate the Green function of planar domains in terms of capacity densities.}
\end{remark}  
            
            \section{Proof of Theorem \ref{th:Main}}
            We first prove the following 
                         
            \begin{proposition}\label{prop:HC}
             Let $\Omega$ be a bounded pseudoconvex domain in $\mathbb C^n$. Suppose there exist  numbers $\beta> 0$, $\gamma>1$, and a family $\{\Omega^t\}_{0<t\le t_0}$ of pseudoconvex domains such that $\overline{\Omega}\subset \Omega^t$ for all $t$ and     
            \begin{equation}\label{eq:M_0}
          \beta t^\gamma \le  \delta_{\Omega^t}(z):=d(z,\partial \Omega^t) \le  t,\ \ \ \forall\,z\in \partial \Omega.  
            \end{equation}
            For every $\tau<\frac1{\gamma-1}$ there exists a contant $C_\tau>0$ such that 
            \begin{equation}\label{eq:M_1}
            -\varrho(z) \le C_\tau (-\log \delta(z))^{-\tau}
            \end{equation}
            for all $z\in \Omega$ sufficiently close to $\partial \Omega$.
            \end{proposition}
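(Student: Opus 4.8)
The strategy is to apply Lemma~\ref{lm:Key} to a family $\{\psi_t\}$ built from the enveloping domains $\Omega^{s}$. Fix $\alpha=1/2$, and for small $t>0$ set $s=s(t):=t/\log(1/t)$, so that $s(t)<(1-\alpha)t$. Since $\Omega^{s}$ is pseudoconvex and $\Omega\subset\overline\Omega\subset\Omega^{s}$, the function $-\log\delta_{\Omega^{s}}$ is psh on $\Omega^{s}\supset\Omega$, so
\[
\psi_t:=\frac{\log(D/\delta_{\Omega^{s}})}{L+1},\qquad L:=\log\frac{D}{\beta s^{\gamma}},\quad D:=\operatorname{diam}\Omega+t_0,
\]
is psh on $\Omega$. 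I would first record three elementary facts for $z\in\Omega$: (i) $\delta_{\Omega^{s}}(z)\ge\delta(z)$, because $\C^n\setminus\Omega^{s}\subset\C^n\setminus\overline\Omega$; (ii) $\delta_{\Omega^{s}}(z)\le\delta(z)+s$, by the triangle inequality and $\delta_{\Omega^{s}}\le s$ on $\partial\Omega$ from (\ref{eq:M_0}); (iii) $\delta_{\Omega^{s}}\ge\beta s^{\gamma}$ on $\overline\Omega$, from the maximum principle for $-\log\delta_{\Omega^{s}}$ and the lower bound in (\ref{eq:M_0}). By (iii), $0\le\psi_t\le L/(L+1)<1$ on $\Omega$, which is the first requirement in Lemma~\ref{lm:Key}.

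Next I would estimate $\psi_t$ on $\partial\Omega_t$ (where $\delta=t$) and on $\Omega\setminus\Omega_{\alpha t}$ (where $\delta\le\alpha t$). By (i) and (ii), $\log(D/(t+s))\le(L+1)\psi_t\le\log(D/t)$ on $\partial\Omega_t$, while $(L+1)\psi_t\ge\log(D/(\alpha t+s))$ on $\Omega\setminus\Omega_{\alpha t}$. Since $\alpha t+s<t$, this yields $\inf_{\Omega\setminus\Omega_{\alpha t}}\psi_t>\sup_{\partial\Omega_t}\psi_t$, together with
\[
\inf_{\Omega\setminus\Omega_{\alpha t}}\psi_t-\sup_{\partial\Omega_t}\psi_t\ \ge\ \frac{1}{L+1}\log\frac{1}{\alpha+s/t},\qquad 1-\sup_{\partial\Omega_t}\psi_t\ \le\ \frac{1}{L+1}\Big(1+\log\frac{t+s}{\beta s^{\gamma}}\Big).
\]
Substituting $s=t/\log(1/t)$ one gets $\log\frac{1}{\alpha+s/t}\to\log(1/\alpha)$ and $1+\log\frac{t+s}{\beta s^{\gamma}}=(\gamma-1)\log(1/t)+O(\log\log(1/t))$ as $t\to0$, so after the factor $(L+1)$ cancels,
\[
\frac{\kappa_\alpha(t)}{\log(1/\alpha)}=\frac{\inf_{\Omega\setminus\Omega_{\alpha t}}\psi_t-\sup_{\partial\Omega_t}\psi_t}{\big(1-\sup_{\partial\Omega_t}\psi_t\big)\log(1/\alpha)}\ \ge\ \frac{1-o(1)}{(\gamma-1)\log(1/t)} .
\]

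Finally I would insert this into (\ref{eq:H_0}). Given $\eta\in(0,1)$, the last display gives $\kappa_\alpha(t)/\log(1/\alpha)\ge(1-\eta)/((\gamma-1)\log(1/t))$ for all small $t$, and since an antiderivative of $1/(t\log(1/t))$ is $-\log\log(1/t)$ we obtain, for all small $r$,
\[
\frac{1}{\log(1/\alpha)}\int_{r/\alpha}^{r_0}\frac{\kappa_\alpha(t)}{t}\,dt\ \ge\ \frac{1-\eta}{\gamma-1}\log\log\frac1r-C_\eta,
\]
so that $\sup_{\Omega\setminus\Omega_r}(-\varrho)\le C'_\eta(\log(1/r))^{-(1-\eta)/(\gamma-1)}$ by (\ref{eq:H_0}). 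Given $\tau<\frac1{\gamma-1}$, choose $\eta:=1-\tau(\gamma-1)>0$ and then put $r:=\delta(z)$ for $z$ close to $\partial\Omega$ (so $z\in\Omega\setminus\Omega_r$); this gives (\ref{eq:M_1}). The delicate point is exactly the choice of the scale $s(t)$: it must be $<(1-\alpha)t$, so that $\psi_t$ is genuinely larger near $\partial\Omega$ than on $\partial\Omega_t$, but smaller only by a slowly varying factor, so that the resulting $\log\log(1/t)$ loss in $L$ is absorbed and letting $\eta\downarrow0$ recovers every exponent below $\frac1{\gamma-1}$; a cruder choice such as $s(t)=\tfrac12(1-\alpha)t$ yields a strictly worse power. (The same scheme with $\gamma=1$ and a constant $s(t)=\lambda t$ makes $\kappa_\alpha(t)$ bounded below, giving $-\varrho(z)\le C\delta(z)^{\eta_0}$ for some $\eta_0>0$, which is the Lipschitz statement mentioned in the introduction.)
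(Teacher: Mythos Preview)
Your proof is correct. Both your argument and the paper's build $\psi_t$ from $-\log\delta_{\Omega^{s}}$ for a scale $s<t$ and then feed the resulting lower bound on $\kappa_\alpha(t)$ into Lemma~\ref{lm:Key}; the difference lies in how the exponent is pushed up to $\frac1{\gamma-1}$. The paper takes the linear choice $s=\varepsilon t$ with $\varepsilon$ a fixed constant, obtains the explicit exponent $\frac{\log 1/(\alpha+\varepsilon)}{(\gamma-1)\log 1/\alpha}$, and then lets $\varepsilon=\alpha\to0$. You instead fix $\alpha=\tfrac12$ once and for all and make the scale $t$-dependent, $s(t)=t/\log(1/t)$; this forces $s/t\to0$, so the numerator $\log\frac1{\alpha+s/t}$ already matches $\log(1/\alpha)$ asymptotically, and the only loss is an $O(\log\log(1/t))$ term in the denominator that washes out after integration. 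The paper's route is a little cleaner (one elementary integral, no $\log\log$ asymptotics), while yours shows that the full range of exponents is attainable with a single $\alpha$ by exploiting the freedom in Lemma~\ref{lm:Key} to let $\psi_t$ depend on $t$ in a nonhomogeneous way; your closing remark that a constant ratio $s=\lambda t$ with $\alpha$ fixed would give a strictly smaller exponent is exactly the reason the paper has to send $\alpha\to0$.
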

            
           \begin{proof} 
         For $0<\varepsilon<1$ we define
            $$
            \psi_t :=\frac{\log 1/\delta_{\Omega^{\varepsilon t}}}{\log 2/(\beta \varepsilon^\gamma t^\gamma)}.
            $$ 
            Clearly, $\psi_t$ is a continuous  psh function on $\Omega$ satisfying $\sup_\Omega\psi_t<1$. For all $z\in \partial \Omega_t$ we have    
            $$
           t=\delta(z) \le  \delta_{\Omega^{\varepsilon t}}(z) \le t+\varepsilon t,
            $$             
           so that  
            $$
           \frac{\log 1/( t+\varepsilon t)}{\log 2/(\beta \varepsilon^\gamma t^\gamma)}\le \sup_{\partial \Omega_t} \psi_t \le \frac{\log 1/t}{\log 2/(\beta\varepsilon^\gamma t^\gamma)}.
            $$
            On the other hand, for $z\in \Omega\backslash \Omega_{\alpha t}$ we have
            $
            \delta_{\Omega^{\varepsilon t}}(z) \le \alpha t + \varepsilon t,
                       $
                       so that 
                       $$
                       \inf_{\Omega\backslash \Omega_{\alpha t}}\psi_t \ge \frac{\log 1/(\alpha t+ \varepsilon t)}{\log 2/(\beta\varepsilon^\gamma t^\gamma)}.
                       $$
                      Then we have
                       $$
                       \kappa_\alpha(t) \ge \frac{\log 1/(\alpha+\varepsilon)}{\log (2+2\varepsilon)/(\beta\varepsilon^\gamma t^{\gamma-1})}.
                       $$
            It follows from (\ref{eq:H_0}) that 
            \begin{eqnarray}\label{eq:M_2}
            -\varrho(z) & \le & \exp\left[-\frac{\log 1/(\alpha+\varepsilon)}{\log 1/\alpha} \int_{\delta(z)/\alpha}^{ r_0}\frac{dt}{t((\gamma-1)\log 1/t +\log (2+2\varepsilon)/(\beta\varepsilon^\gamma))}   \right]\nonumber\\
                        & \le & C_{\alpha,\beta,\varepsilon,\gamma} (-\log \delta(z))^{-\frac{\log 1/(\alpha+\varepsilon)}{(\gamma-1)\log 1/\alpha}}
            \end{eqnarray}  
            for all $z\in \Omega$ sufficiently close to $\partial \Omega$. In particular, $\Omega$ is hyperconvex. If we choose  $\varepsilon=\alpha$  sufficiently small, then the exponent in (\ref{eq:M_2}) can be arbitrarily close to $\frac1{\gamma-1}$.  Thus the proof is complete.
            \end{proof}

\begin{remark}
{\rm The same argument yields that if (\ref{eq:M_0}) holds for $\gamma=1$ then}
$$
-\varrho(z)\lesssim \delta(z)^\tau
$$
{\rm for some constant $\tau>0$.}
\end{remark}
            
            \begin{proof}[Proof of Theorem \ref{th:Main}]
            Since hyperconvexity is a local property (cf. \cite{KerzmanRosay}), it suffices to verify that for every $a\in \partial \Omega$ there exists a neighborhood $U$ of $a$ such that $\Omega\cap U$ is hyperconvex. After an affine transformation,  there exists a ball $B_r=\{z:|z|<r\}$ and a H\"older continuous function $g$ of order $\beta$ on 
            $$
            B'_r=\left\{(z',\text{Re\,}z_n)\in \mathbb C^{n-1}\times \mathbb R:|z'|^2+|\text{Re\,}z_n|^2<r^2\right\}
            $$
          where $z'=(z_1,\cdots,z_{n-1})$,    such that
            $$
          \Omega_{(r)}:=  \Omega\cap B_r=\left\{z\in B_r: \text{Im\,} z_n<g(z',\text{Re\,}z_n)\right\}.
            $$
           Fix $r_1<r$. We set
            $$
            \Omega_{(r_1)}^t:=\left\{z\in B_{r_1+t}: \text{Im\,} z_n<g(z',\text{Re\,}z_n)+t\right\}
            $$
            for $0<t\le t_0\ll1$. Clearly, these domains are pseudoconvex and contain $\overline{\Omega_{(r_1)}}$; moreover,
                        $$
            \delta_{\Omega_{(r_1)}^t}(z)\le t,\ \ \ \forall\,z\in \partial \Omega_{(r_1)}. 
            $$
            On the other hand, for given $z\in \partial{\Omega_{(r_1)}}$ we choose $z_t\in \partial \Omega_{(r_1)}^t$ such that $\delta_{\Omega_{(r_1)}^t}(z)=|z-z_t|$. If $z_t\in \partial B_{r_1+t}$, then
            $
                        \delta_{\Omega_{(r_1)}^t}(z) \ge t.
                                    $
                                    Thus we may assume without loss of generality that  
                                    $$
                  \text{Im\,} z_{n} \le g(z',\text{Re\,}z_{n})  \ \ \text{and}\ \                \text{Im\,} z_{t,n} = g(z'_t,\text{Re\,}z_{t,n})+t.
                                    $$ 
                                    If $\text{Im\,}z_{t,n}-\text{Im\,}z_n\ge t/2$, then $ \delta_{\Omega_{(r_1)}^t}(z)\ge t/2$. If  $\text{Im\,}z_{t,n}-\text{Im\,}z_n < t/2$, then
                                    $$
                                    g(z',\text{Re\,}z_{n})-g(z'_t,\text{Re\,}z_{t,n}) \ge  t-(\text{Im\,}z_{t,n}-\text{Im\,}z_n)> t/2,
                                    $$
                                    while 
                                    \begin{eqnarray*}
                                    g(z',\text{Re\,}z_{n})-g(z'_t,\text{Re\,}z_{t,n})  & \le & c|(z'_t,\text{Re\,}z_{t,n})-(z',\text{Re\,}z_{n})|^\beta   \\
                                    & \le &   c|z_t-z|^\beta   = c   \delta_{\Omega_{(r_1)}^t}(z)^\beta.                          
                                    \end{eqnarray*}
                                    Thus we have
                                    $$
                                     \delta_{\Omega_{(r_1)}^t}(z)\ge (2c)^{-1/\beta}  \cdot t^{1/\beta}.
                                                                       $$
                     By Proposition \ref{prop:HC}, we conclude that for every $\tau<\frac{\beta}{1-\beta}$ there exists  a negative continuous psh function $\rho$ on $\Omega_{(r_1)}$ satisfying
\begin{equation}\label{eq:M_4}
-\rho(z) \lesssim (-\log \delta(z))^{-\tau}
\end{equation}
 for all $z\in \Omega$ sufficiently close to $\partial \Omega_{(r_1)}$.            
            \end{proof}

\begin{remark}
{\rm (1) In view of \cite{ChenH-index}, it is natural to ask whether the local estimate (\ref{eq:M_4})  yields certain  estimate for the Bergman distance. We will come back to this problem in a future paper}.

{\rm (2) It is expected that $-\rho(z) \lesssim (-\log \delta(z))^{-\tau}$ holds also globally on bounded pseudoconvex domains with H\"older boundary in $\mathbb C^n$.}
\end{remark}
            
            \section{The Lipschitz case}
            
            \begin{proposition}\label{prop:Lip}
           Let $\Omega$ be a bounded pseudoconvex domain with Lipschitz boundary. Then there exists a number $\tau>0$ such that
           $$
           -\varrho(z)\lesssim \delta(z)^\tau.
           $$
            \end{proposition}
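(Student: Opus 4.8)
The plan is to deduce the estimate from the case $\gamma=1$ of Proposition~\ref{prop:HC} (see the remark following its proof); for this one needs a \emph{global} family $\{\Omega^t\}$ of larger pseudoconvex domains verifying (\ref{eq:M_0}) with $\gamma=1$. I would simply take the Euclidean neighborhoods
$$
\Omega^t:=\{z\in\mathbb C^n:\mathrm{dist}(z,\Omega)<c\,t\},\qquad 0<t\le t_0\ll1,
$$
where the constant $c\in(0,1)$ is to be fixed in terms of the boundary geometry. Then $\overline\Omega\subset\Omega^t$, and each $\Omega^t$ is pseudoconvex, since (recall) the Minkowski sum of a pseudoconvex domain with a ball is pseudoconvex.

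It remains to verify $c\,t\le\delta_{\Omega^t}(z)\le t$ for $z\in\partial\Omega$. The lower bound needs no regularity: if $w\notin\Omega^t$ then $\mathrm{dist}(w,\Omega)\ge c\,t$, hence for $z\in\partial\Omega$ one has $|z-w|\ge\mathrm{dist}(w,\Omega)-\mathrm{dist}(z,\Omega)=\mathrm{dist}(w,\Omega)\ge c\,t$, so $\delta_{\Omega^t}(z)\ge c\,t$. The upper bound is where the Lipschitz hypothesis is genuinely used: a bounded Lipschitz domain satisfies a uniform exterior cone condition, so there are $\theta_0\in(0,\pi/2)$ and $\rho_0>0$ such that each $z\in\partial\Omega$ is the vertex of a truncated cone of half-angle $\theta_0$, height $\rho_0$, and some axis $\nu_z$ (a unit vector), disjoint from $\overline\Omega$. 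Putting $c:=\sin\theta_0$ and $p:=z+t\,\nu_z$ with $t\le\rho_0/2$, the ball $B(p,t\sin\theta_0)$ lies inside this cone, whence $\mathrm{dist}(p,\Omega)\ge t\sin\theta_0=c\,t$, i.e.\ $p\notin\Omega^t$; since $|z-p|=t$ this gives $\delta_{\Omega^t}(z)\le t$. Thus (\ref{eq:M_0}) holds with $\gamma=1$ and $\beta=c$.

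Now the proof of Proposition~\ref{prop:HC} goes through verbatim with $\gamma=1$: with $\psi_t:=\log(1/\delta_{\Omega^{\varepsilon t}})/\log(2/(\beta\varepsilon t))$ the quantity $\kappa_\alpha(t)$ of Lemma~\ref{lm:Key} is bounded below by the $t$-independent constant $c_\ast:=\log(1/(\alpha+\varepsilon))/\log((2+2\varepsilon)/(\beta\varepsilon))>0$ (taking $\alpha+\varepsilon<1$), so (\ref{eq:H_0}) yields
$$
\sup_{\Omega\backslash\Omega_r}(-\varrho)\le\exp\Big[-\frac{c_\ast}{\log(1/\alpha)}\int_{r/\alpha}^{r_0}\frac{dt}{t}\Big]=\exp\Big[-\frac{c_\ast}{\log(1/\alpha)}\log\frac{r_0\alpha}{r}\Big]\lesssim r^{\tau},
$$
with $\tau:=c_\ast/\log(1/\alpha)>0$, and evaluating at $r=\delta(z)$ gives the claim. (Alternatively, one may argue locally, as in the proof of Theorem~\ref{th:Main}: in a Lipschitz graph chart the construction there has H\"older exponent $\beta=1$, i.e.\ $\gamma=1$, giving $-\rho\lesssim\delta^\tau$ on $\Omega\cap U$, and then a standard quantitative localization transfers this to $\varrho$.) The one real subtlety is the upper estimate $\delta_{\Omega^t}(z)\le t$: for a merely H\"older boundary an outward cusp at $z$ forces $\delta_{\Omega^t}(z)$ to be of order $t^{1/\beta}\gg t$, which is exactly why the general H\"older case gives only the logarithmic bound of Proposition~\ref{prop:HC}, while Lipschitz regularity upgrades it to a genuine power of $\delta$.
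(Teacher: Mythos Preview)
Your main argument contains a genuine gap: the claim that ``the Minkowski sum of a pseudoconvex domain with a ball is pseudoconvex'' is \emph{false}. Equivalently, $\varepsilon$-neighborhoods of pseudoconvex domains need not be pseudoconvex. The classical counterexample is the Diederich--Fornaess worm domain: a bounded, smoothly bounded (hence certainly Lipschitz) pseudoconvex domain in $\mathbb C^2$ whose closure admits no Stein neighborhood basis; in particular, its $\varepsilon$-neighborhoods fail to be pseudoconvex for arbitrarily small $\varepsilon>0$. So your family $\{\Omega^t\}$ is not admissible in Proposition~\ref{prop:HC}, and the argument breaks down precisely at the one step you flag as a ``recall''. (Your verification of the two-sided bound $c\,t\le\delta_{\Omega^t}(z)\le t$ via the uniform exterior cone condition is fine; it is the pseudoconvexity of $\Omega^t$ that fails.)

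The paper avoids this obstruction by not enlarging the domain at all: it invokes Demailly's family $\{v_t\}$ of psh functions on $\Omega$ itself, satisfying $\log 1/(\delta+t)-c<v_t<\log 1/(\delta+t)$, and feeds $\psi_t:=v_{\varepsilon t}/\log(2/(\varepsilon t))$ directly into Lemma~\ref{lm:Key}. Demailly's $v_t$ plays exactly the role that $\log(1/\delta_{\Omega^{\varepsilon t}})$ would play \emph{if} $\Omega^{\varepsilon t}$ were pseudoconvex, but without requiring any such thickening to exist. Your parenthetical alternative (work in local Lipschitz graph charts with $\beta=1$, then globalize) is closer to a viable route, but the ``standard quantitative localization'' you invoke is not supplied in the paper---the Kerzman--Rosay localization cited for Theorem~\ref{th:Main} is purely qualitative---so that step would still have to be written out.
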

            
            \begin{proof}
           By  \cite{Demailly87}, there exist a number $c>0$ and a family $\{v_t\}_{0<t\le t_0}$ of continuous psh functions on $\Omega$ satisfying
           $$
           \log 1/(\delta(z)+t)-c<v_t(z)<\log 1/(\delta(z)+t).
           $$ 
           Given $0<\varepsilon<1$. We set 
           $$
            \psi_t :=\frac{v_{\varepsilon t}}{\log 2/(\varepsilon t)}.
            $$ 
          Clearly, $\psi_t$ is psh on $\Omega$ and satisfies $\sup_\Omega \psi_t<1$.  For all $z\in \partial \Omega_t$ we have    
            $$
           \frac{\log 1/(t+\varepsilon t)-c}{\log 2/(\varepsilon t)}\le \sup_{\partial \Omega_t} \psi_t \le \frac{\log 1/(t+\varepsilon t )}{\log 2/(\varepsilon t)}.
            $$
            On the other hand, for $z\in \Omega\backslash \Omega_{\alpha t}$ we have
                       $$
                       \inf_{\Omega\backslash \Omega_{\alpha t}}\psi_t \ge \frac{\log 1/(\alpha t+\varepsilon t)-c}{\log 2/(\varepsilon t)}.
                       $$
                        Then we have
                       $$
                       \kappa_\alpha(t) \ge \frac{\log (1+\varepsilon)/(\alpha+\varepsilon)-c}{\log (2+2\varepsilon)/\varepsilon+ c}.
                       $$
                       We choose $\alpha,\varepsilon$ sufficiently small so that $\log (1+\varepsilon)/(\alpha+\varepsilon)>c$. 
            It follows from (\ref{eq:H_0}) that   
            $$
            -\varrho (z) \lesssim \delta(z)^\tau
            $$        
            where 
            $$
            \tau= \left(\log 1/\alpha\right)^{-1} \cdot \frac{\log (1+\varepsilon)/(\alpha+\varepsilon)-c}{\log (2+2\varepsilon)/\varepsilon+ c}.
                        $$
   \end{proof}

 \begin{remark}
 {\rm Although Proposition \ref{prop:Lip} is weaker than Harrington's result, it is sufficient for many purposes (compare \cite{ChenH-index}).}
\end{remark}

\section{On domains beyond H\"older boundary regularity}
   
 \begin{proposition}\label{prop:BeyondHC}
             Let $\Omega$ be a bounded pseudoconvex domain in $\mathbb C^n$. Suppose there exists   a family $\{\Omega^t\}_{0<t\le t_0}$ of pseudoconvex domains such that $\overline{\Omega}\subset \Omega^t$ for all $t$ and     
            $$
        \eta(t)\le  \delta_{\Omega^t}(z)\le  t,\ \ \ \forall\,z\in \partial \Omega
            $$ 
where $\eta$ is a continuous increasing function satisfying 
\begin{equation}\label{eq:BeyondHC}
\int_0^{r_0} \frac{dt}{t\log [t/\eta(t)]}=\infty
\end{equation}
for some $r_0\ll 1$. Then $\Omega$ is hyperconvex.
            \end{proposition}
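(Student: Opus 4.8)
\medskip

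\noindent\emph{Overall plan.} I would run the proof of Proposition \ref{prop:HC} with $\beta t^\gamma$ replaced everywhere by $\eta(t)$, and read off hyperconvexity from \eqref{eq:BeyondHC} instead of from a power bound on $\kappa_\alpha$. Before that, dispose of the degenerate case $\eta(t_\ast)=t_\ast$ for some $t_\ast\in(0,t_0]$: then $\delta_{\Omega^{t_\ast}}(z)=t_\ast$ for every $z\in\partial\Omega$, and since $-\log d(\,\cdot\,,\partial\Omega^{t_\ast})$ is psh on the pseudoconvex domain $\Omega^{t_\ast}$, the shrunk domains $\Omega^t:=\{w\in\Omega^{t_\ast}:d(w,\partial\Omega^{t_\ast})>t_\ast-t\}$, $0<t\le t_\ast$, are pseudoconvex, contain $\overline\Omega$, and satisfy $\delta_{\Omega^t}(z)=t$ for all $z\in\partial\Omega$ (the lower bound because $d(\,\cdot\,,\partial\Omega^{t_\ast})$ is $1$-Lipschitz, the upper bound by pushing $z$ a distance $t$ toward $\partial\Omega^{t_\ast}$). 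Then the case $\gamma=1$ of \eqref{eq:M_0}, i.e.\ the remark after Proposition \ref{prop:HC}, applies (with $t_0$ replaced by $t_\ast$) and even gives $-\varrho\lesssim\delta^\tau$, so $\Omega$ is hyperconvex. Hence I may assume $\eta(t)<t$, i.e.\ $g(t):=\log[t/\eta(t)]>0$, on all of $(0,t_0]$.

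\medskip

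\noindent\emph{The construction.} Fix small $\alpha,\varepsilon>0$ with $\alpha+\varepsilon<1$ and put $\psi_t:=\log(1/\delta_{\Omega^{\varepsilon t}})\big/\log(2/\eta(\varepsilon t))$. Then $\psi_t$ is psh on $\Omega$ since $-\log\delta_{\Omega^{\varepsilon t}}$ is psh on $\Omega^{\varepsilon t}\supset\overline\Omega$, and $\sup_\Omega\psi_t<1$ because the shortest segment from an interior point of $\Omega$ to $\partial\Omega^{\varepsilon t}$ meets $\partial\Omega$, so $\delta_{\Omega^{\varepsilon t}}\ge\eta(\varepsilon t)$ on $\overline\Omega$. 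For $z\in\partial\Omega_t$ one has $t\le\delta_{\Omega^{\varepsilon t}}(z)\le t+\varepsilon t$, and for $z\in\Omega\setminus\Omega_{\alpha t}$ one has $\delta_{\Omega^{\varepsilon t}}(z)\le(\alpha+\varepsilon)t$; these verify the hypotheses of Lemma \ref{lm:Key} and yield $\kappa_\alpha(t)\ge\log(1/(\alpha+\varepsilon))\big/\log(2/\eta(\varepsilon t))$. Substituting into \eqref{eq:H_0},
\[
\sup_{\Omega\setminus\Omega_r}(-\varrho)\ \le\ \exp\!\Big[-c\int_{r/\alpha}^{r_0}\frac{dt}{t\,\log(2/\eta(\varepsilon t))}\Big],\qquad c:=\frac{\log(1/(\alpha+\varepsilon))}{\log(1/\alpha)}>0,
\]
so it remains to prove that $\int_0^{\rho_0}\frac{ds}{s\,\log(2/\eta(s))}=\infty$, where $\rho_0=\varepsilon r_0$.

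\medskip

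\noindent\emph{The analytic core.} Put $u=\log(1/s)$ and $Q(u):=g(e^{-u})\ge0$. Since $\eta$ is increasing, $u\mapsto u+Q(u)=-\log\eta(e^{-u})$ is non-decreasing, i.e.\ $Q$ is continuous with $Q'\ge-1$ a.e.; and $Q>0$ by the reduction above. In these variables \eqref{eq:BeyondHC} says $\int^\infty\frac{du}{Q(u)}=\infty$ and the target says $\int^\infty\frac{du}{Q(u)+u+\log 2}=\infty$. I would split $[u_0,\infty)$ into $\{Q\ge u\}$ and $\{Q<u\}$. On $\{Q\ge u\}$ eventually $Q+u+\log 2\le 3Q$, so if $\int_{\{Q\ge u\}}\frac{du}{Q}=\infty$ we are done; otherwise $\int_{\{Q<u\}}\frac{du}{Q}=\infty$. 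On a component $(a,b)$ of the open set $\{Q<u\}$ one has $Q(a)=a$, hence $Q(y)\ge 2a-y$, so a ``short'' component ($b\le\tfrac32 a$) contributes $\le 4\log\frac ba$ to $\int\frac{du}{Q}$, while a ``long'' one contributes $\ge\log\tfrac32$ to $\int\frac{du}{u}$ if it is bounded (and only finitely many bounded long components can occur unless $\int_{\{Q<u\}}\frac{du}{u}=\infty$ already, since each such one has finite $\int\frac{du}{Q}$) or forces $\int\frac{du}{u}=\infty$ outright if it is unbounded. In all cases $\int_{\{Q<u\}}\frac{du}{u}=\infty$, and since eventually $Q+u+\log 2\le 3u$ there, $\int_{\{Q<u\}}\frac{du}{Q+u+\log 2}\ge\tfrac13\int_{\{Q<u\}}\frac{du}{u}=\infty$. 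Thus $\sup_{\Omega\setminus\Omega_r}(-\varrho)\to 0$ as $r\to 0$; since $\varrho<0$ is psh, $\Omega$ is hyperconvex.

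\medskip

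\noindent\emph{Expected obstacle.} The only non-routine ingredient is the lemma on $Q$; the whole argument turns on the equivalence ``$\eta$ increasing'' $\Longleftrightarrow$ ``$Q'\ge-1$'', i.e.\ $Q$ cannot decrease faster than at unit rate, which is precisely what forbids the mass of $\int\frac{du}{Q}$ from concentrating on a set thin enough that $\int\frac{du}{Q+u}$ stays finite. Keeping the degenerate-case reduction and the component bookkeeping honest is where care is needed.
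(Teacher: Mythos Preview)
Your construction is the same as the paper's: the same family $\psi_t=\log(1/\delta_{\Omega^{\varepsilon t}})/\log(2/\eta(\varepsilon t))$, the same verification of the hypotheses of Lemma~\ref{lm:Key}, and the same appeal to \eqref{eq:H_0}. Your treatment of the degenerate case $\eta(t_\ast)=t_\ast$ is extra care that the paper simply omits.

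The one substantive difference is in the bound on $\kappa_\alpha$. You estimate the denominator $1-\sup_{\partial\Omega_t}\psi_t$ crudely by $1$, obtaining
\[
\kappa_\alpha(t)\ \ge\ \frac{\log\frac{1}{\alpha+\varepsilon}}{\log\frac{2}{\eta(\varepsilon t)}},
\]
and this forces you to prove the nontrivial implication $\int^\infty\!du/Q=\infty\Rightarrow\int^\infty\!du/(Q+u+\log 2)=\infty$ under $Q'\ge-1$. The paper instead uses the lower bound $\sup_{\partial\Omega_t}\psi_t\ge \log\frac{1}{(1+\varepsilon)t}\big/\log\frac{2}{\eta(\varepsilon t)}$, which gives the sharper
\[
\kappa_\alpha(t)\ \ge\ \frac{\log\frac{1}{\alpha+\varepsilon}}{\log\frac{(2+2\varepsilon)t}{\eta(\varepsilon t)}},
\]
so that after the substitution $s=\varepsilon t$ one only needs $\int_0\frac{ds}{s\,(\log(1/\varepsilon)+\log[s/\eta(s)])}=\infty$, i.e.\ the much easier implication $\int^\infty\!du/Q=\infty\Rightarrow\int^\infty\!du/(Q+c)=\infty$ for a constant $c$. (This still uses $\eta$ increasing: if $|\{Q<c\}|=\infty$ one is done directly; if $|\{Q<c\}|<\infty$ then the components eventually have length $<c$, on which $Q'\ge-1$ forces $Q\ge c-L_n$, so $\int_{\{Q<c\}}du/Q<\infty$ and the mass must sit on $\{Q\ge c\}$.) The paper asserts this step without comment.

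Your ``analytic core'' argument is correct, but you can avoid it entirely by keeping the extra factor of $t$ in the denominator of $\kappa_\alpha$; that one line saves the whole component bookkeeping.
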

            
           \begin{proof}           
For fixed $0<\varepsilon <1$ the function
            $$
            \psi_t :=\frac{\log 1/\delta_{\Omega^{\varepsilon t}}}{\log 2/\eta(\varepsilon t)}
            $$ 
            is psh  on $\Omega$ and satisfies $\sup_\Omega\psi_t<1$. Since    
            $$
           t\le  \delta_{\Omega^{\varepsilon t}}(z) \le t +\varepsilon t,\ \ \ \forall\,z\in \partial \Omega_t,
            $$             
           we have
            $$
           \frac{\log 1/(t+\varepsilon t)}{\log 2/\eta(\varepsilon t)}\le \sup_{\partial \Omega_t} \psi_t \le \frac{\log 1/t}{\log 2/\eta(\varepsilon t)}.
            $$
            On the other hand, for $z\in \Omega\backslash \Omega_{\alpha t}$ we have
            $
            \delta_{\Omega^{\varepsilon t}}(z) \le \alpha t + \varepsilon t,
                       $
                       so that 
                       $$
                       \inf_{\Omega\backslash \Omega_{\alpha t}}\psi_t \ge \frac{\log 1/(\alpha t+\varepsilon t)}{\log 2/\eta(\varepsilon t)}.
                       $$
                       Fix $\alpha,\varepsilon$ with $\alpha+\varepsilon<1$. Then we have
                       $$
                       \kappa_\alpha(t) \ge \frac{\log 1/(\alpha+\varepsilon)}{\log [(2+2\varepsilon)t/\eta(\varepsilon t)]}.
                       $$
            It follows from (\ref{eq:H_0}) that   for suitable positive constant $\tau$ depending only on $\alpha,\varepsilon$,
            \begin{eqnarray*}
            -\varrho(z) & \le & \exp\left(-\tau \int_{\delta(z)/\alpha}^{ r_0}\frac{dt}{t\log [t/\eta(\varepsilon t)]}  \right)\\
                        & \rightarrow & 0\ \ \ (\text{as}\ z\rightarrow \partial \Omega). 
            \end{eqnarray*}  
           Thus $\Omega$ is hyperconvex.    
            \end{proof}   

  Condition (\ref{eq:BeyondHC}) is satisfied for instance, if $\eta(t)=t (-\log t)^{\log t}$, much faster than any power $t^\gamma$, $\gamma>0$.

\end{document}